\title[Cone length and LS-category]{Cone length and Lusternik-Schnirelmann category in rational homotopy}
\date{\today}
\author{Paul-Eug\`ene Parent}
\address{D\'epartement de Math{\'e}matiques\\
	Complexe STEM, pi\`ece 361\\
         150 Louis-Pasteur Pvt\\
         Universit\'e d'Ottawa\\
         Ottawa, ON, K1N 6N5\\
         Canada}
\email{pparent@uottawa.ca}
\author{Daniel Tanr\'e}
\address{D\'epartement de Math{\'e}matiques\\
         UMR-CNRS 8524 \\
         Universit\'e de Lille\\
         59655 Villeneuve d'Ascq Cedex\\
         France}
\email{Daniel.Tanre@univ-lille.fr}
\subjclass[2020]{55P62 ; 55M30}
\keywords{Rational homotopy. Cone length. Lusternik-Schnirelmann category.}
\renewcommand\l@subsection{\@tocline{2}{0pt}{2pc}{5pc}{}}
\renewcommand\l@subsubsection{\@tocline{3}{0pt}{4pc}{10pc}{}}
\theoremstyle{plain}
\newtheorem{theoremv}{Main Theorem}
\newtheorem{proposition}{Proposition}[section]
\newtheorem{theoremb}[proposition]{Theorem}
\theoremstyle{definition}
\newtheorem{definition}[proposition]{Definition}
\theoremstyle{remark}
\newtheorem{remark}[proposition]{Remark}
\numberwithin{equation}{section}
\newcommand{\secref}[1]{Section~\ref{#1}}
\newcommand{\thmref}[1]{Theorem~\ref{#1}}
\newcommand{\propref}[1]{Proposition~\ref{#1}}
\newcommand{\remref}[1]{Remark~\ref{#1}}
\newcommand{\defref}[1]{Definition~\ref{#1}}
\def\ov{\overline}
\def\cC{{\mathcal C}}
\def\cJ{{\mathcal J}}
\def\cL{{\mathcal L}}
\def\cS{{\mathcal S}}
\def\cU{{\mathcal U}}
\def\crI{{\mathscr I}}
\def\crK{{\mathscr K}}
\def\crL{{\mathscr L}}
\def\crM{{\mathscr M}}
\def\1{{\mathbf 1}}
\def\ttx{{\mathtt x}}
\def\L{\mathbb{L}}
\def\Q{\mathbb{Q}}
\def\dga{{{\rm DGA}\;}}
\def\cdgc{{{\rm CDGC}\;}}
\def\cdga{{{\rm CDGA}\;}}
\def\dgl{{{\rm DGL}\;}}
\newcounter{ejemplo}
\newcounter{figura}
\def\cat{\mathrm{cat}\,}
\def\cl{{\mathrm{Cl \,}}}
\def\tx{{\tilde{x}}}
\begin{document}

\begin{abstract}
Lusternik-Schnirelmann category (LS-category) of a topological space is the least integer $n$ such that
there is a covering of $X$ by $n+1$ open sets, each of them being contractible in $X$.
The cone-length is the minimum number of iterated cofibrations necessary to get a space in the  homotopy type of $X$, 
starting from a contractible space. 
The LS-category of a space is always less than or equal to its cone-length. 
Moreover, these two invariants differ by at most one. 
In 1981, J.-M. Lemaire and F. Sigrist conjectured that they are always equal for rational spaces. 
This conjecture is clearly true for spaces  of LS-category 1 and, in 1986, Y. F\'elix and J-C. Thomas verified 
it for spaces of LS-category 2.
But, in 1999,  the general conjecture is invalidated by N. Dupont who built a rational space 
of cone-length 4 and LS-category  3.
In this work, we  provide examples of rational spaces
of cone-length $(k+1)$ and LS-category $k$ for any $k>2$.
\end{abstract}

\maketitle

\tableofcontents

\newpage
%%%%%%%%%%%%%%
\section{Introduction and recalls}
\begin{quote}
This section contains  definitions and properties of Lusternik-Schnirelmann category and of cone-length.
For more details on them, we refer to  \cite{CLOT}. 
We also present the objective of this work and place it among known results.
\end{quote}

We write $Y\simeq X$ if $Y$and $X$ are topological  spaces of the same homotopy type.

\begin{definition}\label{def:LS}
The \emph{Lusternik-Schnirelmann category} (LS-category) of a topological space $X$ is
the least integer $n$ such that
there is a covering of $X$ by $n+1$ open sets, each of them being contractible in $X$.
We denote $\cat X=n$.
\end{definition}

If $X$ is a manifold, its LS-category is a lower bound on the number of critical points for any smooth function
on $X$. This property was the original motivation behind Lusternik and Schnirelmann's definition but it reaches
consequences also in homotopy theory, as the following feature.

\medskip
A space $X$ is a homotopy retract of  $Y$ if there exist maps, $f\colon X\to Y$ and $g\colon Y \to X$,
such that $g\circ f$ is homotopic to the identity. In this situation, we have
$\cat X\leq \cat Y$. In particular, we deduce $\cat X \leq \cat (X\vee S^n)$.
The behavior of $\cat$ towards homotopy retractions makes it a homotopy invariant: if $X\simeq Y$, then $\cat X=\cat Y$.
In contrary if, in \defref{def:LS} we replace ``each of them being contractible in $X$'' 
by ``each of them being contractible'' we lose this invariance (\cite[Proposition 3.11]{CLOT}).
To obtain a homotopy invariant with this variation, we have to consider 
the minimal value on all the spaces in the homotopy type of $X$.
This gives  the definition of strong category. 
We do not continue in this direction, preferring an equivalent formulation called cone-length
(\cite{Ganea, Octav1}).

\begin{definition}\label{def:conelength}
The \emph{cone-length} of a path-connected space, denoted $\cl X$, is 0 if $X$ is contractible and, otherwise, is equal to the
smallest integer $n$ such that there are cofibration sequences
$Z_{i-1}\to Y_{i-1}\to Y_{i}$, $1\leq i\leq n$, with $Y_{0}\simeq *$ and $Y_{n}\simeq X$.
\end{definition}

It is known that a space $X$ is of LS-category one if, and only if, $X$ has a structure of co-H space
 (\cite[Example 1.49]{CLOT}) and of cone-length one if, and only if, it is a suspension
 (\cite[Proposition 3.16]{CLOT}). Thus, 
 a co-H space $X$ which is not a suspension satisfies $\cat X=1$ and $\cl X=2$.
 As such spaces exist, the two invariants do not coincide in general. 
 But they are close, as shows the following result.
 
 \begin{proposition}[{\cite{Takens,Octav1}}]\label{prop:catcl}
 For any path-connected normal ANR, $X$, there is a suspension $\Sigma Z$ such that 
 $\cl(X\vee \Sigma Z)=\cat X$. Moreover,
   $$\cat X\leq\cl X\leq\cat X+1.$$
 \end{proposition}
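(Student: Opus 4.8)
The plan is to establish the two inequalities separately and then extract the suspension $\Sigma Z$ from the analysis of the first.

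\textbf{The inequality $\cat X\leq \cl X$.} I would argue that if $\cl X=n$, then $\cat X\leq n$. Build $X$ up via the cofibration sequences $Z_{i-1}\to Y_{i-1}\to Y_i$ of \defref{def:conelength}, so that $Y_i$ is obtained from $Y_{i-1}$ by attaching a cone $CZ_{i-1}$. I would prove by induction on $i$ that $\cat Y_i\leq i$: the base case $\cat Y_0=\cat(*)=0$ is immediate, and for the inductive step I would use the standard fact that attaching a single cone raises LS-category by at most one. Concretely, if $\cat Y_{i-1}\leq i-1$, choose a categorical open cover $U_0,\dots,U_{i-1}$ of $Y_{i-1}$ and pull it back along the quotient map; the open set containing the cone point can be taken to be a neighborhood of $CZ_{i-1}$, which is contractible in $Y_i$ (it deformation-retracts to the cone point), giving an $(i+1)$-element categorical cover of $Y_i$. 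Since $Y_n\simeq X$ and $\cat$ is a homotopy invariant, $\cat X\leq n=\cl X$. The normal ANR hypothesis is what makes these Urysohn-type open-set constructions and the homotopy extension arguments go through cleanly.

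\textbf{The inequality $\cl X\leq \cat X+1$ and the wedge statement.} This is the delicate half, and I expect it to be the main obstacle. The idea is to start from the Ganea/Whitehead description of LS-category: $\cat X\leq n$ iff the $n$-th Ganea fibration $p_n\colon G_n(X)\to X$ admits a section up to homotopy. From a section one wants to reconstruct $X$ (or $X\vee\Sigma Z$) as an $n$-fold, or $(n{+}1)$-fold, iterated cone. The standard route is that $G_n(X)$ itself is built from $G_{n-1}(X)$ by a cofibration attaching a cone (its fibre is a join, hence a suspension), so $\cl G_n(X)\leq n+1$ with the last attached piece being a suspension. Then I would transport this cone decomposition across the section $s\colon X\to G_n(X)$ and the retraction $p_n$: the composite shows $X$ is a homotopy retract of a space of cone length $\leq n+1$. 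The remaining work is to show that a homotopy retract of a space of cone length $m$, after wedging on a suitable suspension $\Sigma Z$, has cone length exactly $m$ — i.e.\ one must absorb the ``retract'' into a wedge summand. This is where the suspension $\Sigma Z$ in the statement comes from: $Z$ is assembled from the complementary data of the retraction (roughly the homotopy cofibre of $s$, which is a suspension because $p_n\circ s\simeq \id$ splits things off), and one checks that $X\vee\Sigma Z$ literally carries the iterated cone structure coming from $G_n(X)$. Getting the bookkeeping of this splitting right, and ensuring the attached pieces at each stage remain suspensions, is the technical heart.

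\textbf{Assembling the chain.} Once both inequalities are in hand, the display $\cat X\leq \cl X\leq \cat X+1$ follows, and the first clause $\cl(X\vee\Sigma Z)=\cat X$ follows by combining: on one hand $\cat X=\cat(X\vee\Sigma Z)\leq \cl(X\vee\Sigma Z)$ by the first inequality together with homotopy invariance of $\cat$ and $\cat(X\vee S^n)\geq\cat X$ run the other way (wedging with a suspension does not change $\cat$ here because the suspension is itself categorical of dimension $0$ after... more precisely one uses $\cat(X\vee Y)=\max(\cat X,\cat Y)$ when one factor is a suspension, or the explicit construction), and on the other hand the construction above gives $\cl(X\vee\Sigma Z)\leq \cat X$. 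I would cite \cite{Takens} and \cite{Octav1} for the precise form of the Ganea-fibre cone decomposition and the retract-to-wedge absorption lemma rather than reproving them, and I would be careful to invoke the normal ANR hypothesis exactly where the open-cover manipulations and the homotopy extension property for the relevant cofibrations are used.
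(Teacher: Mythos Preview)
The paper does not supply its own proof of this proposition: it is stated with the attribution \cite{Takens,Octav1} and the text immediately moves on to the rational setting. So there is no argument in the paper to compare your sketch against; the paper's ``proof'' is precisely the citation.

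Your outline is broadly in the spirit of those references. The inequality $\cat X\le\cl X$ is standard and your inductive argument is the right one. Two remarks on the harder half. First, an off-by-one: with $G_0(X)=\ast$ and $G_i(X)$ the cofibre of the fibre inclusion $F_{i-1}\hookrightarrow G_{i-1}(X)$, one has $\cl G_n(X)\le n$, not $n+1$; this is what ultimately gives $\cl(X\vee\Sigma Z)=\cat X$ rather than $\cat X+1$. (The inequality $\cl X\le\cat X+1$ then follows, since $X$ is the cofibre of $\Sigma Z\hookrightarrow X\vee\Sigma Z$, adding one more stage.) Second, the step you label ``absorb the retract into a wedge summand'' is not a general fact about retracts of spaces of small cone length---cone length is \emph{not} monotone under retracts, which is exactly why $\cl$ and $\cat$ can differ---so it cannot be stated or proved at that level of generality. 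It is specific to the Ganea tower: one must analyse the homotopy cofibre of the section $s\colon X\to G_n(X)$ and push the explicit cone decomposition of $G_n(X)$ through that analysis. Since you already plan to cite \cite{Octav1} at exactly this point, your treatment ends up amounting to the same thing as the paper's: state the result and refer to Cornea for the technical core.
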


Let's now enter in the rational world. Here by ``rational space'' we mean
a simply connected space whose homotopy groups are 
rational vector spaces. 
The definition of cone-length  requires some adaptation for which we  follow \cite[Page 836]{Octav2}: 
for rational spaces, in \defref{def:conelength}, the cofibration sequence starts with $i=2$, 
the space $Y_{1}$ is required to be of the homotopy type of
the rationalization of a simply connected suspension and  
the spaces $Z_{i}$  have to be rational and simply connected.

\medskip
With rational spaces, the behaviors of $\cat$ and $\cl$ are  different. First, here,  any co-H space is a  wedge of spheres, 
thus we cannot find examples of  spaces of LS-category one and cone-length two. Moreover,
Y. F\'elix and J.-C.~Thomas (\cite{FT1})  have proved that any rational space of LS-category 2 is of cone-length 2 also.
(See also \cite{Lucia1} for a  short proof for spaces without finite type restriction and  admitting 
  some torsion in their homotopy groups.)
This result reinforces the conjecture of Lemaire and Sigrist 
(\cite{LemaireSigrist}) that the two invariants, $\cat$ and $\cl$, coincide for rational spaces.
In \cite{Dupont}, N. Dupont provides a negative answer with the example of a space of LS-category 3 and cone-length 4. 
Let us also mention the existence of a family of spaces $X_{k}$ of category $k$ and cone-length $(k+1)$,
constructed by D. Stanley in \cite{Don}; these examples use the torsion part of homotopy groups and are not
adapted for a rational consideration.

\medskip
We complete the collection of rational spaces having distinct $\cat$ and $\cl$ invariants.

\begin{theoremv}
For any $k\geq 3$, there exist rational spaces $X_{k}$ such that
$$\cat X_{k}=k\quad \text{and} \quad \cl X_{k}=k+1.$$
\end{theoremv}

  The main pattern of proofs is that of \cite{Dupont}, which corresponds to $k=3$, with arguments adapted to the greater generality of the case treated.
We build the spaces $X_{k}$ as  realizations  of  differential graded Lie algebras $\crL_{k}$
with $\cat\crL_{k}=k$ and $\cl \crL_{k}=k+1$. 
\secref{sec:Lie} contains the necessary presentation of these models for our purpose. 
 In \secref{sec:examplescat}, 
  we begin with a generic case of cell attachment that does not increase the LS-category. 
In \secref{sec:examplescl}, we construct the graded differential Lie algebras $\crL_{k}$ endowed with a decomposition
of length $k+1$, see \defref{def:decomposition}. 
Finally, we show that there are no  isomorphisms of $\crL_{k}$ 
that  shorten the length of decomposition   in \secref{sec:iso}
and finish the proof of the Main Theorem in \secref{sec:proof}.

\medskip
{\sc Notation:} 
In the text, we often abbreviate the expression "differential graded Lie algebra" to dgl
and denote by  \dgl the category of reduced differential graded Lie algebras; i.e., no element of degree 
$q$ with $q<1$.
The \emph{free product} of two dgl's $L$ and $L'$ (which is the sum in the
category \dgl\!\!) is denoted by $L\sqcup L'$.

We represent by $|a|$ the degree of a homogeneous element $a$ of a graded vector space $W$.
The subvector space of elements of degree $i$ is denoted $W_{i}$.
In the case of a dgl $(L,d)$, the space of indecomposables is the quotient
$QL=L/[L,L]$ endowed with the induced differential $Qd$. If $L$ is the graded free Lie algebra, $L=\L(W)$, 
we have $QL=W$ and the graduation by the bracket length is denoted by $\L(W)=\oplus_{i\geq 1}\L^{[i]}(W)$.
This induces a decomposition of the differential $d=d_{1}+ d_{2}+\dots$ with $d_{i}(W)\subset \L^{[i]}(W)$.
An extra filtration on $W$ will be represented by $W=\oplus_{i\geq 1}W_{(i)}$. We also denote
$W_{\leq (i)}=\oplus_{j=1}^iW_{(j)}$ and
$W_{<(i)}=\oplus_{j=1}^{i-1}W_{(j)}$.

\smallskip
For the convenience of the reader, we also have included in \remref{rem:recap}
a summary of the degree and differential of  various elements introduced in the text.
%%%%%%%%%%%%%%%%%%
\section{Lie models of spaces}\label{sec:Lie}

\begin{quote}
In this section, we recall the algebraization of rational spaces in terms of differential graded Lie algebras,
made by Quillen in \cite{Quillen}, see also \cite{Daniel} for concrete examples, models of fibrations, ... 
\end{quote}

A (graded) \emph{Lie algebra} consists of a graded vector space, $L=\oplus_{i\geq 1}L_{i}$, 
together with a linear product, called the \emph{Lie bracket},
$$[-,-]\colon L_{i}\otimes L_{j}\to L_{i+j},$$
 verifying the following properties for homogeneous elements,
\begin{eqnarray*}
\text{Antisymmetry:}&
[a,b]=-(-1)^{|a]\,|b|}[b,a],\\
\text{Jacobi identity:}&
(-1)^{|a|\,|c|} [a,[b,c]]+
(-1)^{|a|\,|b|}  [b,[c,a]] +
(-1)^{|b|\,|c|}  [c,[a,b]]=0.
\end{eqnarray*}

A  \emph{differential graded Lie algebra}, henceforth dgl, is a Lie algebra endowed with a differential,
$\partial\colon L_{i}\to L_{i-1}$ such that, for any homogeneous elements $a$, $b$, one has
\begin{equation}\label{equa:derivationLie}
\partial [a,b]=[\partial a,b]+(-1)^{|a|}[a,\partial b].
\end{equation}

Let $W=\oplus_{i\geq 1}W_{i}$ be a graded vector space.
We denote by $\L(W)$ the free graded Lie algebra on $W$.
By  ``free dgl'', we mean  a dgl, free as Lie algebra.
A \emph{minimal} dgl is a free dgl $(\L(W),\partial)$ where the differential $\partial$ takes value in $\L^{[\geq 2]}(W)$. 

\medskip
In \cite{Quillen}, Quillen defines a series of couples of adjoint functors linking the category $\cS_{2}$ of  
simply connected pointed spaces with \dgl\!,
inducing an equivalence between the rational homotopy category of $\cS_{2}$ and a homotopy category associated 
to \dgl. Denote by $\lambda\colon \cS_{2}\to \dgl$ the Quillen functor. 
A \emph{Quillen model} of $X\in\cS_{2}$ is a
dgl map $\varphi\colon (\L(W),\partial)\to \lambda(X)$, of domain a free dgl, inducing an isomorphism in homology. 
The dgl $(\L(W),\partial)$ itself is also called a Quillen model and if $(\L(W),\partial)$ is minimal, we say a 
\emph{Quillen minimal model} of $X$. The latter is unique up to isomorphisms.

\medskip
The homology groups of a  Quillen  model $(\L(W),\partial)$ of $X$
are isomorphic to the homotopy groups of the loop space
$\Omega X$. They are  equipped with a structure of graded Lie algebra whose bracket is the
Samelson bracket % 
$[-,-]\colon \pi_{p}(\Omega X)\otimes \pi_{q}(\Omega X)\to \pi_{p+q}(\Omega X)$.
The homology groups of the  indecomposables, $(W,Q\partial)$, are desuspensions of the rational homology of $X$;
i.e., $H_{i}(W,Q\partial)\cong H_{i+1}(X;\Q)$.

\begin{remark}\label{rem:product}
Some topological constructions can be translated algebraically on models of spaces. 
This is the case for the product of two spaces $X$ and $X'$ 
from their respective algebraic models, $(\L(V),\partial)$ and $(\L(V'),\partial)$.
The product space $X\times X'$ has for Quillen model
$(M,\delta)=(\L(V\oplus V'\oplus s(V\otimes V'), \delta)$ with $\delta v=\partial v$, $\delta v'=\partial v'$,
$\delta(s(v\otimes v'))=[v,v']-\beta(v,v')$ and $|s(v\otimes v')|=|v|+|v'|+1$.

If $v$ and $v'$ are $\partial$-cycles, the element $\beta(v,v')$ is zero. 
We will only consider such elements in what follows but, in the general case, the form of $\beta(v,v')$ can be specified: it is a decomposable element of the kernel of the canonical map $M\to \L(V)\oplus \L(V')$,
see \cite[Proposition VII.1.(2)]{Daniel}.
\end{remark}

\begin{remark}\label{rem:lratr}
  Category \dgl is at the crossroads leading to the categories \cdga of  commutative differential graded  algebras, 
 \cdgc of 2-reduced cocommutative differential graded coalgebras 
 (no element of degree <2) and \dga of differential algebras. 
 Let's recall from \cite{Quillen} the main functors among each other.
$$\xymatrix{
&&\dga
\\
\cdga&
\cdgc\ar[l]_-{\sharp} \ar@<1ex>[r]^-{\cL}
&\dgl\ar[u]_-{\cU}\ar@<1ex>[l]^-{\cC}
}$$

$\bullet$ Functor $\cL$ is the primitive elements of the cobar construction. We will use it only on 
cocommutative  graded coalgebras endowed with a zero differential. 
The explicit corresponding value is given when we use it  in \eqref{equa:dquadratic}. 

$\bullet$ Functor $\cC$ is the right adjoint functor of $\cL$, given by the classical Cartan-Eilenberg-Chevalley construction. We do not use it in this work.

$\bullet$ Functor $\sharp$ is the dualisation. 
We will use it in the reverse way when it exists:   the dual of a 2-reduced   graded algebra of finite type
has a graded commutative coalgebra structure whose image we can take under $\cL$. 

$\bullet$ The enveloping algebra $\cU$ is the left adjoint of the underlying Lie algebra functor from \dga to \dgl\!.
Let  $(L,\partial)$ be a dgl. The differential graded algebra $\cU(L,\partial)$ is constructed as follows.
 We denote by $(T(L),\partial)$ the tensor algebra on $L$ and by $\cJ$ the ideal of $T(L)$
generated by the elements $v\otimes v'-(-1)^{|v|\,|v'|}v'\otimes v -[v,v']$, $v$ and $v'$ in $L$.
The enveloping algebra is the quotient $\cU(L,\partial)=(T(L),\partial)/\cJ$.
 \end{remark}

The next definition produces a  transcription in \dgl of the notion of cone-length for a space.

\begin{definition}\label{def:decomposition}
Let $(\L(V),\partial)$ be  a free dgl.  
\begin{enumerate}
\item A \emph{decomposition of length $k$ of $(\L(V),\partial)$}  
is a decomposition
$V=V_{(1)}\oplus\dots\oplus V_{(k)}$
such that
$\partial_{|V_{(1)}}=0$ and 
$\partial V_{(i)}\subset \L(V_{(1)}\oplus\dots\oplus V_{(i-1)})$,
for any $2\leq i\leq k$. 
The lower index in $V_{(i)}$ is called the \emph{filtered degree.}
\item The \emph{cone-length} of a free dgl $\crL$ is the smallest integer $k$ for which there exists
a quasi-isomorphism
$\crL\xrightarrow{\simeq} (\L(V),\partial)
$,
where $(\L(V),\partial)$ is equipped with a decomposition of length $k$. 
We denote it $\cl \crL$.
\end{enumerate}
\end{definition}

If $\crL$ is a  Quillen model of a simply connected space $X$, 
the cone-length of $\crL$ is equal to the cone-length of $X_{\Q}$, see \cite[Proposition 2.7]{LemaireSigrist}.

\medskip
Let us notice that the existence of a decomposition of length $k$ for the differential $\partial$ 
is not an invariant up to isomorphism (\cite{LemaireSigrist}). 
 For instance,
let $(L,\partial)=(\L(a,b,e,f),\partial)$ with
$|a|=1$, $|b|=3$, $|e|=4$, $|f|=6$ with
$\partial a=\partial e=0$, $\partial b=[a,a]$, $\partial f=[a,e]+[a,[a,b]]$.
A decomposition for $\partial$ is of length 3 but the substitution of generators $e'=e+[a,b]$ brings a 
decomposition of  length~2.

\begin{remark}\label{scheerertanre}
The  inequalities
\begin{equation}\label{equa:catwedge}
\cat X\leq \cat(X\vee S^n)\leq \cl(X\vee S^n)
\end{equation}
are sufficient for the determination of the LS-category of our examples.
Note however that a transcription of   LS-category in \dgl exists in \cite[Section 7.2]{ST1}.
%.
To be complete, mention also the existence of an extension of the representation 
in complete~\dgl of  non-simply connected spaces in \cite{BFMT}.
\end{remark}

\begin{remark}\label{rem:sullivan}
Let $X$ be a simply connected simplicial set, with finite dimensional rational homotopy groups.
An algebraization of $X$ in terms of
commutative differential graded algebras (cdga) is provided by D. Sullivan (\cite{Sullivan})
who also introduces a realization functor transforming a cdga $(A,d_{A})$ 
into a simplicial set $\langle  A,d_{A}\rangle$.
If $A= \land V$ is a free commutative graded algebra over a graded vector space 
$V=\oplus_{i\geq 2}V^i$ with $\dim V^i<\infty$ and if the differential $d_A$ is decomposable, then
there is a dualization between the vector space $V$ and the homotopy groups 
of the realization $\langle \land V,d\rangle$.

\medskip
Transcriptions of LS-category and cone-length of spaces in terms of  Sullivan  models already exist; they
are respectively due to Y. F\'elix and S. Halperin (\cite{FH1}), and O. Cornea (\cite{Octav1}).
Note also that K. Hess has greatly simplified the determination of the LS-category with cdga's (\cite{Kath})
by reducing its characterization to morphisms of ad'hoc module maps.
\end{remark}

  %%%%%%%%%%%%%%%%%%%%%%%
  \section{Cell attachments  preserving the  LS-category}\label{sec:examplescat}
   
   \begin{quote}
   In this section, we present a generic case of  cell attachment
  which does not increase the LS-category. 
  \end{quote}
  
  {\sc Construction:}
  Suppose that $L=(\L(W),\partial)$ is a free dgl endowed with a decomposition of length $k$,
  $W=\oplus_{i=1}^k W_{(i)}$, as in \defref{def:decomposition}. 
  Moreover, we also suppose the existence of a cycle
  $\alpha\in\L(W_{(1)})$ such that there exists  $\hat{\alpha}$ 
 with  $\partial\hat{\alpha}=\alpha$ and $\hat{\alpha}\notin \L(W_{\leq (k-1)})$. 
 Let's set $n=|\alpha|$, thus $n+1=|\hat{\alpha}|$.
 As the case $k=2$ is known (\cite{FT1}), we suppose $k\geq 3$.
 
 \medskip
 We consider two copies $L$, $L'$ of this situation and the following dgl,
 \begin{equation}\label{equa:firststep}
 \crL'=(\L(V),\partial)=(L \sqcup L' \sqcup \L(s(W_{(1)}\otimes W'_{(1)}),\partial),
 \end{equation}
  with the previous differential on the free product $L\sqcup L'$ and
  $\partial(s(x\otimes y'))=[x,y']$, if $x\in W_{(1)}$, $y'\in W'_{(1)}$.
  (This is a sub-dgl of the model of a product recalled in \remref{rem:product}.)
  We  set $s(W_{(1)}\otimes W'_{(1)})$ in filtered degree 2.
  By construction, there exists $\gamma\in \L(W_{(1)}\oplus W'_{(1)}\oplus s(W_{(1)}\otimes W'_{(1)}))$
  such that $\partial\gamma=(-1)^{n+1}[\alpha,\alpha']$ and $|\gamma|=2n+1$.

  \medskip
  As the element $[\alpha,\hat{\alpha}']+\gamma$ is a cycle, we can set
  \begin{equation}\label{equa:secondstep}
  \crL=(\L(V\oplus \Q v ),\partial)=(\crL'\sqcup \L(v),\partial)
  \end{equation}
  with $\partial v=[\alpha,\hat{\alpha}']+ \gamma\in \crL'$ and  $|v|=2n+2$.
  With our choice of generators, the new element $v$ gives a decomposition of length $k+1$ for the differential $\partial$.
  Nevertheless, we get the following result.

  \begin{proposition}[{\cite[Proposition 1]{Dupont}}]\label{prop:thecat}
  With the previous notations, there exists a sphere $S^{n+2}$ of model $(\L(w),0)$,
  $|w|=n+1$,  such that
  $$\cl(\crL\sqcup (\L(w),0))\leq k.$$
  Thus, we  have $\cat\crL\leq k$.
  \end{proposition}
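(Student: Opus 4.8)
The plan is to exhibit an explicit length-$k$ decomposition of the dgl $\crL\sqcup(\L(w),0)$ obtained after adding a single free generator $w$ in degree $n+1$, chosen so that it ``absorbs'' the troublesome generator $v$. The idea, following \cite{Dupont}, is that the two free generators $v$ (of degree $2n+2$, with $\partial v=[\alpha,\hat\alpha']+\gamma$) and $w$ (of degree $n+1$, with $\partial w=0$) can be replaced by a new pair of generators realizing the same free dgl but admitting a shorter filtration. Concretely, I would look for a change of basis of the form $v'=v-[\hat\alpha,w]$ (up to sign and up to correction terms) so that $\partial v'$ no longer involves $\hat\alpha'$ in a way that forces filtered degree $k+1$; after this substitution one hopes $\partial v'\in\L(W_{\leq(k-1)}\oplus W'_{\leq(k-1)}\oplus\cdots)$ together with lower-filtration new generators, so that $v'$ and $w$ can be placed in filtered degree $\leq k$.

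The key steps, in order, are: (1) Record the degrees and differentials of all generators of $\crL$ and fix $|w|=n+1$, $\partial w=0$; note that $[\hat\alpha,w]$ has degree $(n+2)+(n+1)=2n+3$, which is wrong, so instead the correct correction is in degree $2n+2$, e.g.\ using $[\alpha,w]$ of degree $(n+1)+(n+1)=2n+2$ — I would compute $\partial[\alpha,w]=0$ and realize the substitution must be subtler, pairing $w$ with $\hat\alpha'$ instead: $[\hat\alpha',w]$ has degree $(n+2)+(n+1)$, still off. So the actual mechanism (as in Dupont) is to use $w$ to kill the class of $[\alpha,\hat\alpha']$ in homology by a second new generator, i.e.\ build the wedge so that $[\alpha,w]$ or a combination becomes a boundary, then substitute $v\mapsto v'=v-(\text{something involving }w)$ whose differential drops in filtration. (2) Verify that the resulting $(\L(V\oplus\Q v\oplus\Q w),\partial)$ is isomorphic as a dgl to $\crL\sqcup(\L(w),0)$ via this triangular change of generators (triangularity guarantees it is an isomorphism). (3) Exhibit the length-$k$ decomposition explicitly: keep $W_{(1)},\dots,W_{(k-1)}$ and $W'_{(1)},\dots,W'_{(k-1)}$ in their filtered degrees, place $s(W_{(1)}\otimes W'_{(1)})$ in filtered degree $2$, put $w$ in filtered degree $1$ (since $\partial w=0$), and check that $W_{(k)}$, $W'_{(k)}$, and the new $v'$ all have differential landing in $\L$ of the strictly lower-filtration generators; this is where the hypothesis $\hat\alpha\notin\L(W_{\leq(k-1)})$ and the explicit form of $\gamma$ get used. (4) Conclude $\cl(\crL\sqcup(\L(w),0))\leq k$, and then invoke \eqref{equa:catwedge} from \remref{scheerertanre}: since $(\L(w),0)$ is a Quillen model of $S^{n+2}$, we get $\cat\crL\leq\cat(\crL\vee S^{n+2})\leq\cl(\crL\sqcup(\L(w),0))\leq k$.

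The main obstacle is step (3): producing the correct substitution of generators and then checking, degree by degree, that after the substitution every new generator's differential genuinely lies in the Lie subalgebra generated by strictly-lower-filtered-degree generators. The subtlety is that $\hat\alpha$ itself lies in high filtration (by hypothesis $\hat\alpha\notin\L(W_{\leq(k-1)})$), so any bracket $[\alpha,\hat\alpha']$ naively sits in filtered degree $k+1$; the whole point of adding $w$ is to rewrite $v$ so that this bracket is traded for an expression in the new generator $w$ (placed in degree $1$) plus terms from $\gamma$ which, by its construction in \eqref{equa:firststep}, only involves $W_{(1)}$, $W'_{(1)}$ and $s(W_{(1)}\otimes W'_{(1)})$ — all of filtered degree $\leq 2$. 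Making this bookkeeping precise, and in particular getting the degree of $w$ and the correction term exactly right so that the substitution is both degree-homogeneous and triangular, is the delicate part; the rest is formal, relying on \defref{def:decomposition} and the adjunction between cone length of dgls and of spaces.
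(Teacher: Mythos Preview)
Your outline has the right overall shape, but there is a genuine gap at the crucial step~(3): you never find the correct change of generators, and the attempts you list cannot work for the reason you yourself observe --- the degrees do not match if you leave $w$ untouched and try to modify $v$ alone by a bracket involving $w$.

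The missing idea is that the substitution must involve $w$ as well as $v$. In the paper's proof one sets
\[
w'=w+\hat{\alpha},\qquad v'=v-[w+\hat{\alpha},\hat{\alpha}'].
\]
Since $|\hat{\alpha}|=n+1=|w|$, the first replacement is degree-homogeneous; and $|[w',\hat{\alpha}']|=(n+1)+(n+1)=2n+2=|v|$, so the second is too. The substitution is triangular, hence an isomorphism of free dgl's. Now $dw'=\partial\hat{\alpha}=\alpha\in\L(W_{(1)})$, so $w'$ is placed in filtered degree~$2$ (not~$1$ as you proposed), and a direct computation gives
\[
dv'=[\alpha,\hat{\alpha}']+\gamma-[\alpha,\hat{\alpha}']-(-1)^{n+1}[w',\alpha']=\gamma+(-1)^{n}[w',\alpha'],
\]
which lies in $\L(V_{\leq(2)}\oplus\Q w')$; hence $v'$ may be placed in filtered degree~$3\leq k$. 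The original decomposition of $V$ is kept, and one obtains a decomposition of length~$k$.

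The conceptual point you were circling around is this: $\hat{\alpha}$ is the obstruction (it sits in filtration $k$), and the free generator $w$ is introduced precisely so that, after the coordinate change, $w'$ \emph{absorbs} $\hat{\alpha}$ as an indecomposable in low filtration. You cannot achieve this by subtracting a decomposable from $v$ alone, because any bracket of $w$ with an element of $\crL'$ of the correct total degree either has vanishing differential or reproduces the same high-filtration term. Once you allow $w\mapsto w+\hat{\alpha}$, the rest of your plan (step~(4) via \eqref{equa:catwedge}) goes through verbatim.
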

  
  \begin{proof}
  We proceed to the following substitution of variables in the dgl
  $\crL\sqcup (\L(w),0)$,
  $$\varphi(w)=w+\hat{\alpha}, \quad 
   \varphi(v)=v-[w+\hat{\alpha},\hat{\alpha}'],$$
  and the identity map on the other generators. 
  Let us denote $w'=\varphi(w)$
   and $v'=\varphi(v)$.
   We obtain a free dgl, $\crM$, isomorphic to $\crL\sqcup \L(w)$, with
   \begin{equation}\label{equa:crLremanie}
   \crM= (\crL'\sqcup \L(v',w'),d),\quad \text{and}
      \end{equation}
$$
    dw'=\alpha, \quad
     dv'=
  [\alpha,\hat{\alpha}']+\gamma -[\alpha,\hat{\alpha}']-(-1)^{n+1}[w+\hat{\alpha},\alpha']
 =
  \gamma +(-1)^n[w',\alpha'].$$
  With this new differential, we can modify the  decomposition of $V\oplus \Q w'\oplus \Q v'$. 
  We keep the previous decomposition for $V$ and set 
  $w'$ in filtration degree $2$, since $dw'\in \L(W_{(1)})$.
  As $\gamma$ and $w'$ are in filtration degree 2, and $\alpha'$  in filtration degree 1, the differential
  $dv'$ is in filtration degree 2 and we can set $v'$ in filtration degree~3.
   We obtain a decomposition of length $k$, as announced.
  \end{proof}
  %%%%%%%%%%%%%%%%%%%
 %%%%%%%%%%%%%%%%%%%%%%%
  \section{Construction of the dgl $\crL_{k}$ for $k\geq 3$.}\label{sec:examplescl}
 
 \begin{quote}
 We specify a particular case of  the  pattern developed in  \secref{sec:examplescat}. As
 we have to determine, in particular, some properties of the  automorphisms of the minimal model,
 we minimize the dimension of the  vector space of indecomposables of this model by choosing 
 rational spaces with few rational homology together with a large LS-category.
 Let $k\geq 3$ be a fixed integer, the case $k=3$ corresponding to Dupont's example.
 As $k$ is fixed, we simplify the notation by writing 
 $\crL$ instead of $\crL_{k}$.
 \end{quote}

 Let $A$ be an algebra of finite type. Its linear dual, $\sharp A$, is canonically endowed with a structure
 of cocommutative coalgebra, with zero differential.
 The  Quillen functor $\cL$ (\cite{Quillen} and \remref{rem:lratr})  
 gives a free dgl $\cL(\sharp A,0)$ on the desuspension of 
  $\sharp A$, $(s^{-1}\sharp A)_{p}=(\sharp A)_{p+1}$. 
  The differential $d$ of $\cL(\sharp A,0)$ is quadratic and defined 
 by the pairing between $A$ and $\sharp A$, together with the  law of algebra.
 More explicitely, we have  (\cite[I.1.(7)]{Daniel})
 \begin{equation}\label{equa:dquadratic}
 \langle sa_{1},sa_{2}; ds^{-1}b\rangle=(-1)^{|a_{1}|}\langle a_{1}a_{2};b\rangle, \quad a_{i}\in A,\; b\in\sharp A.
 \end{equation}
 We namely consider the two algebras,
 \begin{equation}\label{equa:twoalgebras}
 A_{k}=(\land u)/u^{k+1}
 \quad \text{and}\quad
 B_{k}=(\land v)/v^3, 
 \quad \text{with}\quad
  |u|=4, \;|v|=2k.
 \end{equation}
 We get
 $$\cL(A_{k})=(\L(a_{1},\dots,a_{k}),d)
 \quad\text{and}\quad
 \cL(B_{k})=(\L(b_{1},b_{2}),d),
 $$
 with $ |a_{i}|=4i-1$ and $ |b_{i}|=2ki-1$. 
 To simplify the notation we set $a=a_{1}$ and $b=b_{1}$.
The  differentials $d$ are perfectly defined by \eqref{equa:dquadratic}. In particular, we have
 $$db=da=0, \quad
 db_{2}=[b,b],  \quad
 da_{2}=[a,a], \quad
 da_{3}=[a,a_{2}], \quad
 da_{4}=[a_{2},a_{2}]+4[a,a_{3}],\quad
 \dots$$
 With reference to the CW-structure of the connected sum of 
 two compact connected oriented  $4k$-dimensional manifolds,
  we proceed to a ``connected sum''  of these two dgl's to obtain the dgl
 \begin{equation}\label{equa:theL}
 (L,\partial)= (\L(a,a_{2,}\dots,a_{k-1},b,c),\partial)
 \end{equation}
with $|c|=4k-1$, $\partial c= -[b,b]-da_{k}$ and $\partial=d$ on the other elements.
Let us emphasize that $da_{k}\in \L^{[2]}(a,\dots,a_{k-1})$ and 
$da_{k}\notin \L(a,\dots,a_{k-2})$.
We thus have a decomposition of length $k$
for the differential $\partial$ with $L=\L(W)$ and
\begin{equation}\label{equa:decompositionkW}
W_{(1)}=\Q(a,b),\quad 
W_{(2)}=\Q\, a_{2}, \quad \dots,\quad
W_{(k-1)}=\Q \,a_{k-1},\quad  
W_{(k)}=\Q \,c.
\end{equation}
Following the notation of \secref{sec:examplescat}, we introduce  $\alpha=[a,[b,b]]$.  Let us note that
the element $[a,da_{k}]$ is a cycle of degree $3+4k-2=4k+1$ in $\L(a,a_{2},\dots,a_{k-1})$.
Let  $\langle A_{k-1}\rangle$ be the simplicial set associated to $A_{k-1}$ by the Sullivan realization functor
and $\Omega\langle A_{k-1}\rangle$ its image by the loop space functor.  The
homotopy vector space $\pi_{*}\Omega \langle A_{k-1}\rangle\otimes \Q$ is concentrated in degrees 3 and $4k-2$
which correspond to the desuspension of the generators $u,\,u'$, $du=0$, $du'=u^k$,
$|u|=4$, $|u'|=4k-1$ of its Sullivan model, see \remref{rem:sullivan}. 
So, the cycle $[a,da_{k}]\in L$ is a boundary in $(L,\partial)$ and there exists 
$f\in \L^{[2]}(a,a_{2},\dots,a_{k-1})$,
of degree $|f|=4k+2$ such that $\partial f=[a,da_{k}]$.
Thus, the element
$\hat{\alpha}=[a,c]-f$ verifies 
$|\hat{\alpha}|=4k+2$,
$\hat{\alpha}\notin\L(W_{\leq k-1})$ and
\begin{equation}\label{equa:delhatalpha}
\partial \hat{\alpha}
=
-[a,\partial c]-\partial f
=
[a,[b,b]]+[a,da_{k}]-\partial f
=
 [a,[b,b]]=\alpha.
\end{equation}
Also, by definition of $\hat{\alpha}$, we have
$\partial [a,c]=\alpha+\partial f$. We get a dgl $(L,\partial)$ as that of \secref{sec:examplescat}
and  follow the procedure described in \eqref{equa:firststep}: 
we take a wedge of  two copies $L$, $L'$ and kill the brackets between the spherical homology classes
by setting
 $$\crL'=(\L(V),\partial)=(L\sqcup L'
 \sqcup \L(s(a\otimes a'), s(a\otimes b'), s(b\otimes a'), s(b\otimes b')),\partial),
 $$
 with 
 $\partial s(a\otimes a')=[a,a']$,
  $\partial s(a\otimes b')=[a,b']$,
   $\partial s(b\otimes a')=[b,a']$,
    $\partial s(b\otimes b')=[b,b']$.
 The dgl $\crL'$ has a decomposition of length $k$ for the differential $\partial$, with
 \begin{equation}\label{equa:decompositionkV}
\left\{
\begin{array}{ll}
V_{(1)}=\Q(a,a',b,b'),& 
V_{(2)}=\Q( a_{2},a'_{2},s(a\otimes a'), s(a\otimes b'), s(b\otimes a'), s(b\otimes b')),\\
\dots \\
V_{(k-1)}=\Q(a_{k-1},a'_{k-1}),&  
V_{(k)}=\Q(c,c').
\end{array}
\right.
\end{equation}
In this situation the integer $n$ of \secref{sec:examplescat} is an odd number, since
$n=|\alpha|=|[a,[b,b]]|=3+4k-2=4k+1$. Therefore, the element $\gamma\in \L^{[5]}(V_{\leq (2)})$ verifies
$$\partial \gamma=[\alpha,\alpha']=[[a,[b,b]],[a',[b',b']]].$$
We continue with the construction done in \eqref{equa:secondstep}:
we add a new generator $v$, $|v|=8k+4$, to get
\begin{equation}\label{equa:theL}
\crL=\crL' \sqcup \L(v) =(\L(V\oplus \Q\,v),\partial).
\end{equation}
The differential $\partial$ on $\crL'$ is defined above and we set
$$\partial v=[\alpha,\hat{\alpha}']+\gamma=
[[a,[b,b]],[a',c']-f']+\gamma
\in\L^{[5]}(V_{\leq (k)}).
$$
We have a decomposition of length $(k+1)$ of $\crL$ for the differential $\partial$ by adding to 
\eqref{equa:decompositionkV} the vector space $\Q\,v$ in filtration $k+1$.
The Main Theorem is a consequence of  the following statement.
 
 \begin{theoremb}\label{thm:superDupont}
 The dgl $\crL$ verifies
 $$\cat\,\crL = k
 \quad \text{and}\quad
 \cl \crL=k+1.
 $$
 \end{theoremb}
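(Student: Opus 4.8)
The plan is to establish the two equalities separately, each requiring an upper and a lower bound. For $\cl\crL \leq k+1$, there is nothing to do: the decomposition of length $k+1$ was constructed explicitly by placing $\Q v$ in filtration $k+1$ on top of \eqref{equa:decompositionkV}. For $\cat\crL \leq k$, I would invoke \propref{prop:thecat}: the dgl $\crL$ was built precisely as the output of the \textbf{Construction} of \secref{sec:examplescat}, with $n = |\alpha| = 4k+1$, so that proposition gives a sphere $S^{n+2}$ of model $(\L(w),0)$ with $\cl(\crL \sqcup (\L(w),0)) \leq k$. Translating to spaces via \cite[Proposition 2.7]{LemaireSigrist} and using the inequalities \eqref{equa:catwedge} from \remref{scheerertanre}, namely $\cat X_k \leq \cat(X_k \vee S^{n+2}) \leq \cl(X_k \vee S^{n+2}_\Q) \leq k$, yields $\cat\crL \leq k$. (Here one must double-check the degree bookkeeping: $|w| = n+1 = 4k+2$, and $(\L(w),0)$ is indeed the Quillen model of $S^{4k+3}_\Q$.)

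The substantive content is the two lower bounds $\cat\crL \geq k$ and $\cl\crL \geq k+1$. For $\cat\crL \geq k$ I would argue through rational cohomology: the cdga model of $\crL$ (or the known relation $\cat = \mathrm{e}$ for the right invariants) should contain a nonzero product of $k$ positive-degree classes. Concretely, the two algebras $A_k = (\land u)/u^{k+1}$ and $B_k = (\land v)/v^3$ that seeded the construction contribute a class whose $k$-th power survives — one expects $u^k \neq 0$ in the cohomology of the connected-sum piece $(L,\partial)$, giving cup-length $k$ and hence $\cat \geq k$ by the classical cup-length lower bound (or its rational-homotopy refinement). One must check that the connected-sum operation producing \eqref{equa:theL}, the wedge with $L'$, and the addition of $v$ do not kill this class; since those operations only add cells in higher degrees or along the $B_k$ and diagonal directions, the monomial $u^k$ of degree $4k$ should persist in cohomology.

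The hard part — and the technical heart of the paper, deferred to \secref{sec:iso} and \secref{sec:proof} — is the lower bound $\cl\crL \geq k+1$, equivalently $\cl\crL > k$. One cannot simply exhibit the length-$(k+1)$ decomposition and stop, because, as the $(\L(a,b,e,f),\partial)$ example after \defref{def:decomposition} warns, the length of a decomposition is \emph{not} an isomorphism invariant: a clever change of generators might shorten it, and worse, \defref{def:decomposition}(2) allows passing to any quasi-isomorphic free dgl. So the argument must show that \emph{no} free dgl quasi-isomorphic to $\crL$ admits a decomposition of length $\leq k$. Since $\crL$ is built to be (close to) minimal — the prefatory remark to \secref{sec:examplescl} emphasizes minimizing the indecomposables precisely for this reason — one reduces to the minimal model and then to studying its automorphisms: one must show every automorphism $\varphi$ of $(\L(V \oplus \Q v),\partial)$ sends $v$ to something still requiring filtered degree $k+1$, i.e. $\varphi(v) \notin \L(V_{\leq(k)})$ up to the freedom in lower generators, because $\partial v$ genuinely involves $\hat{\alpha}' = [a',c']-f'$ with $c' \in W'_{(k)}$ in an essential way. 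The obstacle is that one must rule out \emph{all} substitutions $\varphi(v) = v + (\text{correction in }\L(V))$ and $\varphi$ acting nontrivially on the $c, c'$ generators; this is a careful filtered-degree and bracket-length analysis of $\partial\varphi(v)$ modulo decomposables, tracking why the term $[\alpha, c']$ (degree $8k+3$, bracket length $4$) cannot be absorbed. I expect this automorphism-rigidity computation — showing the decomposable pieces $\gamma$ and $f$ leave no room to trade $c'$ for lower-filtration generators — to be the genuine obstacle, and it is exactly what \secref{sec:iso} is set up to handle.
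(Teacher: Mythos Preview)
Your outline of the three easy directions is correct, and you rightly locate the real content in $\cl\crL\geq k+1$. But there is a genuine gap in your plan for that inequality. You write that ``one reduces to the minimal model and then to studying its automorphisms,'' as though minimality of $\crL$ makes this reduction automatic. It does not. If $(\L(W),\delta)$ is a non-minimal free dgl quasi-isomorphic to $\crL$ and carrying a length-$k$ decomposition, then its minimal model is indeed isomorphic to $\crL$, but the length-$k$ decomposition on $W$ does \emph{not} in general descend to one on the minimal model. The paper spells this out at the start of \secref{sec:proof}: eliminating the linear part $\delta_1$ forces identifications of generators across filtration levels and destroys the decomposition. So \propref{prop:paskminimal} --- which is the automorphism analysis you correctly anticipate, phrased as ``no decomposable $x$ with $\partial(v-x)\in\L(V_{\leq(k-1)})$'' --- cannot be applied directly.

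The missing ingredient is \propref{prop:quadraticdiff}: although the full differential on the minimal model $(\L(\ov K),\delta)$ need not respect a length-$k$ decomposition, its \emph{quadratic part} $\delta_2$ does, and moreover the full $\delta$ satisfies $\delta\ov K\subset\L(\ov K_{<(k)})$. The actual proof then takes the isomorphism $\Phi\colon\crL\to(\L(\ov K),\delta)$, notes that $\omega:=\Phi^{-1}(\Phi(v)-\Phi_l(v))$ is decomposable, and exploits a parity fact specific to this construction: every generator of $\crL$ has odd degree, so $\omega$ lives in bracket-length $\geq 4$. Extracting the length-$4$ part $\Omega$ and looking at $\Phi_l\partial(v-\Omega)\in\L(\ov K_{<(k)})$ is what puts one exactly in the situation of \propref{prop:paskminimal}, and the contradiction comes from a cup-length-versus-cone-length comparison on the image. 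Your plan omits both \propref{prop:quadraticdiff} and this degree/bracket-length argument; without them the reduction to \secref{sec:iso} is not justified.
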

 
 The first assertion is a direct corollary of \propref{prop:thecat}. We have to prove that there is no free dgl
 $\crM=(\L(Z),\delta)$, quasi-isomorphic to $\crL$ and endowed with
 a decomposition of length $k$ for the differential $\delta$. 
 As first step, in the next section, we consider for $\crM$ the dgl $\crL$ itself.
 
 %%%%%%%%%%%%%%%%%%%
 \section{Non-existence of isomorphisms of $\crL_{k}$  shortening the length decomposition}\label{sec:iso}
 
 \begin{quote}
 In this section, we show that there is no isomorphism of $\crL_{k}$ shortening the length decomposition,
 as a consequence of the following statement.
 As before $k\geq 3$ is a fixed integer and we denote $\crL_{k}$ by $\crL$.
 \end{quote}

 \begin{proposition}\label{prop:paskminimal}
 There is no decomposable element $x$ such that 
 $\partial(v-x)\in \L(V_{(1)}\oplus\dots\oplus V_{(k-1)})$.
 \end{proposition}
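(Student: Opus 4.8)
The strategy is to argue by contradiction on the level of indecomposables and low bracket-length terms. Suppose there were a decomposable element $x\in \L^{\geq 2}(V)$ with $\partial(v-x)\in \L(V_{(1)}\oplus\dots\oplus V_{(k-1)})$. Recall that $\partial v=[\alpha,\hat\alpha']+\gamma$ lives in $\L^{[5]}(V_{\leq(k)})$ and that the only generators in filtered degree $k$ are $c$ and $c'$; so the obstruction to shortening the decomposition is precisely the component of $\partial v$ that genuinely involves $c$ or $c'$. The term $[\alpha,\hat\alpha']=[[a,[b,b]],[a',c']-f']$ contains the summand $\pm[[a,[b,b]],[a',c']]$, which is a bracket monomial of length $5$ that involves $c'$, and $\gamma\in\L^{[5]}(V_{\leq(2)})$ does not. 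So we need to show that no decomposable $x$ can have $\partial x$ equal, modulo $\L(V_{<(k)})$, to this $c'$-term (up to the unavoidable freedom of adding elements of $\L(V_{<(k)})$).

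\textbf{Key steps.} First I would set up the precise bookkeeping: write $\partial(v-x)\equiv 0 \pmod{\L(V_{<(k)})}$ and project onto the quotient $\L(V)/\overline{\L(V_{<(k)})}$, where $\overline{\L(V_{<(k)})}$ denotes the ideal generated by $V_{<(k)}$. Since $V=V_{<(k)}\oplus \Q(c,c')\oplus \Q v$, this quotient is (as a graded vector space in the relevant degrees) controlled by bracket monomials containing at least one $c$, $c'$, or $v$. In degree $|v|-1=8k+3$, and since $\partial x$ can involve $v$ only through $x$ having a $v$-summand (which is impossible by degree: $x$ decomposable and $|v|$ is the top generator degree in play — here one must check that $2|v|-\text{something}$ can't land in the right degree, or more simply that a bracket involving $v$ has too high a degree), the relevant part of $\partial x$ is spanned by monomials containing exactly one of $c,c'$. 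So I would next classify, in degree $8k+3$, the decomposable elements $x$ whose $c$- or $c'$-linear part could produce $\pm[[a,[b,b]],[a',c']]$ under $\partial$: since $\partial$ is quadratic on the "old" generators and $\partial c=-[b,b]-da_k$, $\partial c'=-[b',b']-da'_k$, applying the Leibniz rule to a monomial linear in $c'$ replaces $c'$ either by $-[b',b']-da'_k$ or leaves it alone while differentiating the rest. Tracking the $c'$-linear part of $\partial x$ thus reduces to: which cycles $z$ in $\L(V_{<(k)})$ of the appropriate degree can occur as $[z,c']$-coefficients, i.e. I need $[z,c']$ with $z$ a suitable decomposable cycle of degree $|[a,[b,b]]|=4k+1$, and then check whether the \emph{remaining} non-$c'$ terms forced by Leibniz can be cancelled. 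The degree-$(4k+1)$ decomposables in $\L(V_{<(k)})$ that are cycles: the natural candidate is $\alpha=[a,[b,b]]$ itself (a cycle, degree $4k+1$), giving $x$ a summand $\pm[\alpha,c']$, whence $\partial x$ acquires $\pm[\alpha,\partial c']=\mp[\alpha,[b',b']]\mp[\alpha,da'_k]$ in addition to cancelling the $c'$-term. The crux is then to show that $[\alpha,da'_k]$ (and the other induced terms) cannot be absorbed: $da'_k\in\L^{[2]}(a',\dots,a'_{k-1})$ and $da'_k\notin\L(a',\dots,a'_{k-2})$, so $[\alpha,da'_k]$ is a nonzero class that is \emph{not} a boundary modulo lower filtration — this is where the homological input about $\Omega\langle A_{k-1}\rangle$, used earlier to produce $f$, gets re-used in the reverse direction to show $[a,da_k]$ (and its primed analogue bracketed with $\alpha$) is not killed by any available correction.

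\textbf{The main obstacle.} The hard part will be the cancellation analysis: after positing the natural $x$, one is left needing to show that a specific element of $\L(V_{<(k)})$ — roughly $[\alpha,da'_k]$ together with $\gamma$-type corrections — is \emph{not} of the form $\partial(\text{decomposable in }\L(V_{<(k)}))$, so that $\partial(v-x)$ still fails to lie in $\L(V_{<(k)})$. Equivalently, one must rule out all decomposable $x$, not just the obvious candidate, which means understanding the degree-$8k+3$ part of $\L(V)$ finely enough — it has many bracket monomials in $a,b,a',b',a_2,\dots$ — and showing that no linear combination works. I expect the cleanest route is to pass to a convenient quotient dgl (kill everything except the sub-dgl generated by $a,b,c$ on one side and $a',b',c'$ on the other, plus $v$, plus perhaps $s(a\otimes a')$) in which the relevant homology can be computed directly, reducing the statement to: in that quotient, the homology class of $[\alpha,\hat\alpha']$ modulo $\L(V_{<(k)})$ is nonzero and not hit; the Sullivan-model computation for $\langle A_{k-1}\rangle$ (homotopy concentrated in degrees $3$ and $4k-2$) is exactly the tool that certifies $H_{4k+1}$ of the one-variable-side is what it needs to be, so that $[a,da_k]$ being a boundary is a \emph{coincidence of that particular degree} which does not propagate to $[\alpha,da'_k]$ in degree $8k+3$.
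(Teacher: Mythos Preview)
Your general architecture---argue by contradiction, pass to a quotient, and track an obstruction term---matches the paper's, but the specific candidate and obstruction you identify are wrong, and this is not a cosmetic issue: it sends the analysis off in the wrong direction.

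Concretely, you propose $x=\pm[\alpha,c']$ as the ``natural candidate.'' But $\alpha$ is a cycle, so
\[
\partial[\alpha,c']=-(-1)^{|\alpha|}[\alpha,\partial c']=[\alpha,[b',b']]+[\alpha,da'_k]\in\L(V_{<(k)}),
\]
which \emph{does not} contain $[\alpha,[a',c']]$ at all. So $[\alpha,c']$ contributes nothing to cancelling the obstruction---its differential already lies in $\L(V_{<(k)})$. To produce $[\alpha,[a',c']]=[[a,[b,b]],[a',c']]$ in $\partial x$, the factor $c'$ must \emph{survive}, so one must differentiate elsewhere; the only way to manufacture the piece $[a,[b,b]]$ is to differentiate a $c$ (since $\partial c=-[b,b]-da_k$). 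Hence the genuine candidates are bracket monomials of length~$4$ containing \emph{both} $c$ and $c'$ (or one of the $s(-\otimes-')$ generators), specifically multilinear Lie polynomials in $\{a,c,a',c'\}$. This is the $6$-dimensional space the paper isolates and then cuts down, via two successive quotients $\crL'_1$, $\crL'_2$ (killing the $s(-\otimes-')$'s and their differentials, then $[c',a]$, $[a,a'_i]$, $[a',a']$), to the two elements $[[a,c],[a',c']]$ and $[c',[a',[a,c]]]$.

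The contradiction is then extracted not from $[\alpha,da'_k]$ but from a specific monomial $w=\overline{a'a'_{k-1}a'ca}$ in the \emph{universal enveloping algebra} $\cU\crL'_2$, which appears in $\rho_2(\partial[c',[a',[a,c]]])$ via the component $[[a',a'_{k-1}],[a',[a,c]]]$ of $\partial c'$, and nowhere else in the equation; the point is that $w$ involves $c$, which has cup-length $k$, so it cannot lie in (the enveloping algebra of) a cone-length-$(k-1)$ piece. There is also a case split you do not foresee: when the coefficient of $[[a,c],[a',c']]$ in $x$ equals~$1$, a separate symmetry argument (a claim swapping the roles of $L$ and $L'$) is needed before the same enveloping-algebra computation finishes the job. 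Your sketch does not reach either of these mechanisms, and the ``homological input about $\Omega\langle A_{k-1}\rangle$'' you invoke at the end is not what drives the contradiction here.
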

 
 If such $x$ exists, the substitution of generators, $v\mapsto v-x$,
  should give a decomposition of length $k$ for $\crL$.
 
 \begin{proof}
 For any decomposable element $x$, the differential $\partial (v-x)$ belongs to
 $\L(V_{\leq (k)})$. 
 We are thus working in $\crL'$ whose differential is \emph{quadratic.}
We make a proof by contradiction and assume that such an element $x$ exists
with $\partial(v-x)\in \L( V_{\leq (k-1)})$. Thus its differential verifies
\begin{equation}\label{equa:delomega}
\partial x=[\alpha,[a',c']]+\beta=
[[a,[b,b]],[a',c']]+\beta, \quad \text{with}\quad\beta\in \L(V_{\leq (k-1)}).
\end{equation}
Let us first note that the bracket $[[a,[b,b]],[a',c']]$ can only appear as the differential of brackets of length 4,
formed by the elements 
\begin{equation}\label{equa:laliste}
\{a,c,a',c'\},\quad \{s(a\otimes a'), b,b,c'\}\quad \text{or}\quad \{a, s(b\otimes a'),b,c'\}.
\end{equation}
To simplify the situation, we first replace $\crL'$ by the   dgl
 \begin{equation}\label{equa:lbar}
 {\crL}'_{1}=(\left(\L(a,b)\oplus \L(a',b')\right) \sqcup \L(a_{2},\dots,a_{k-1},c,a',\dots,a'_{k-1},c'),{\partial}).
 \end{equation}
 This is the quotient of $\crL'$ by the  ideal generated by the elements $s(-\otimes -')$ and their differential. 
 We keep the notation $\partial$ for the induced differential on $\crL'_{1}$ and
denote by ${\rho_{1}}\colon \crL'\to{\crL'_{1}}$ the canonical map.

 \medskip
 With \eqref{equa:laliste}, the element $\rho_{1}([[a,[b,b]],[a',c']])$ appears only in the differential of 
 images by $\rho_{1}$ of elements of bracket length 4 of the distinct elements,
 $\{a,c,a',c'\}$.
 In $\crL'$, they are the multilinear Lie polynomials of $\{a,\, c=t_{1},\, a'= t_{2},\, c'=t_{3}\}$ 
 of \cite[Section 5.6.2]{Lie}.
They  form a subvector space of dimension 6 of $\crL'$ with basis the set
 $$B=\left\{[t_{\sigma(1)},[t_{\sigma(2)},[t_{\sigma(3)},a]]]\mid \sigma\in\cS_{3}\right\},
 $$
 where $\cS_{3}$ is the symmetric group of permutations of 3 elements. 
 The  image  of $B$ by ${\rho_{1}}$ is in bijection with the elements of $B$ such that $t_{\sigma(3)}\neq a'$,
 therefore it is of cardinality 4.
 To reduce the complexity again, we quotient $\crL'_{1}$  by the differential ideal generated by $[c',a]$. 
 (So we quotient also by the elements $[a,a'_{i}]$.)
 We also kill the ideal generated by $[a',a']$, a reduction that will be used hereafter.
 We thus obtain a dgl  $(\crL'_{2},\partial)$
 and denote by $\rho_{2}\colon \crL'\to \crL'_{2}$ the canonical surjection.
 If $\alpha\in\cU\crL'$, we denote $\ov \alpha\in\cU\crL'_{2}$ its image by $\cU\rho_{2}$,
 where $\cU$ is the universal enveloping algebra functor, see \remref{rem:lratr}.
 We also denote by $\rho_{2}\colon \crL'\to \cU\crL'_{2}$ the composition of $\rho_{2}$ with the canonical injection
 $\crL'_{2}\to \cU\crL'_{2}$.
  To sum up, the algebra $\crL'_{2}$ is the quotient of $\crL'$ by the ideal generated by
  \begin{equation}\label{equa:quotient}
    \small
\left\{
s(a\otimes a'), s(a\otimes b'), s(b\otimes a'), s(b\otimes b'),
[a,a'], [a,b'], [b,a'], [b,b'],[a,c'], [a,a'_{i}], [a',a']
\right\},
\end{equation}
  for all $i$, $2\leq i\leq k-1$.
  The image of the basis $B$ by $\rho_{2}$ is of cardinality 2 and corresponds to $t_{\sigma(3)}=c$. 
 More specifically, we are reduced to the image by $\rho_{2}$ of
 $$
 [c',[a',[a,c]]]\quad \text{and}\quad
 [a',[c',[a,c]]].
 $$
  From  antisymmetry and Jacobi identity, we deduce
  $$[c,a]=[a,c] \quad \text{and} \quad
  [[a',c'],[a,c]]=[a',[c',[a,c]]] + [c',[a',[a,c]]].$$
 So, we can  choose  $\left\{\rho_{2}( [[a,c],[a',c']]), \rho_{2}([c',[a',[a,c]]])\right\}$ as basis  of $\rho_{2}(B)$.
 In this preamble, we established the existence of two rational numbers $\lambda_{1}$, $\lambda_{2}$ 
 and of an element $y\in\crL'$ 
 whose differential ${\partial}{y}$ does not contain a component in ${[[a,[b,b]],[a',c']]}$, such that
 \begin{equation}\label{equa:rho2x}
 \rho_{2}(x)=
 \rho_{2}\left(
 \lambda_{1}{[[a,c],[a',c']]}+
 \lambda_{2}{[c',[a',[a,c]]]}+{y}
 \right).
 \end{equation}
The image by $\rho_{2}$ of the equality \eqref{equa:delomega} becomes
 \begin{equation}\label{equa:clerho2}
\rho_{2}\left( \lambda_{1} {\partial}{[[a,c],[a',c']]}+
 \lambda_{2} {\partial}{[c',[a',[a,c]]]}+\partial {y}-
 {[[a,[b,b]],[a',c']]}-{\beta}\right)=0,  
 \end{equation}
with $\beta\in \L(V_{\leq (k-1)})$.

\medskip
$\bullet$ \emph{Suppose  $\lambda_{1}\neq 1$.}
 The element 
$\rho_{2}\left([[a,[b,b]],[a',c']]\right)$ has a component in $\ov{ab^2c'a'}\in\cU\crL'_{2}$ which is not killed by 
the relations \eqref{equa:quotient}. Thus, we have
\begin{equation}\label{equa:rho2neq0}
\rho_{2}\left([[a,[b,b]],[a',c']]\right)\neq 0.
\end{equation} 
Then, if  $\lambda_{2}=0$, a component of
$\rho_{2}\left([[a,[b,b]],[a',c']]\right)$ would belong to 
$\rho_{2}(\L(V_{\leq (k-1)}))$, which is a contradiction. 
We  conclude:  $\lambda_{2}\neq 0$.

\medskip
We focus on the component $w=\ov{a'a'_{k-1}a'ca}\in \cU\crL'_{2}$ of
$\rho_{2}([[a',a'_{k-1}],[a',[a',c]]])$ which appears in
$\rho_{2}(\partial [c',[a',[a,c]]])$. 
 We first note that $w\neq 0$ in $\cU\crL'_{2}$ and study its occurence in the other terms of
\eqref{equa:clerho2}.
\begin{enumerate}[(i)]
\item In the expansion of
$\rho_{2}(\partial[[a,c],[a',c']])$,  the component $w$ can only appear in the term
$\rho_{2}([[a,c],[a',[a',a'_{k-1}]])$.
But
$2[a',[a',a'_{k-1}]]= [a'_{k-1},[a',a']]$  
and the component in $w$
of $\rho_{2}(\partial[[a,c],[a',c']])$ is zero since $[a',a']$ has been killed.
\item We are left with $\rho_{2}(\partial y)$.
From the expression of the (quadratic) differential  $\partial$,  the element $w$ can only 
appear in the image by $\rho_{2}\partial$ of a 
multilinear   Lie polynomial of $\{a,c,a',c'\}$, $\{a'_{2},a'_{k-1},a,c\}$ or $\{s(a\otimes a'),a'_{k-1},a',c\}$. 
 The first case is excluded by choice of $y$.
  In  $\cU\crL'_{2}$, the elements $\partial a'_{2}=[a',a']$ and $s(a\otimes a')$ are killed. 
  Thus $\rho_{2}({\partial}\,{y})$ has
 no component in $w$.
\end{enumerate}
We conclude that $w=\ov{a'a'_{k-1}a'ca}$ appears as an element of $\cU \rho_{2}(\L(V_{\leq (k-1)}))$.
But $c$ corresponds to a cup product of length $k$ and $\L(V_{\leq (k-1)})$ is of cone-length $k-1$. 
We get a contradiction and the proposition is established.

\medskip $\bullet$
\emph{Suppose $\lambda_{1}=1$.} We justify  the following claim after the proof.\\
{\sc Claim:} \emph{there exists a decomposable element $\tx$, having a zero component in  $[[a,c],[a',c']]$,
 and such that
 $\partial \tx=[[a,c],\alpha']+\beta'$ with $\beta'\in \L(V_{\leq (k-1)})$.}
 
 \medskip
 Repeating the argument from the beginning of this proof with an exchange of $L$ and $L'$, 
 \eqref{equa:clerho2} becomes
 \begin{equation}\label{equa:exchange}
\rho_{2}\left(  
 \lambda_{2} {\partial}{[c,[a,[a',c']]]}+\partial {y}-{[[a,c],[a',[b',b']]}-{\beta}\right)=0,
 \end{equation}
with $\beta\in \L(V_{\leq (k-1)})$.
The same reasoning than above gives the contradiction.
 \end{proof}
   
\begin{proof}[Proof of the claim]
Suppose 
$x= [[a,c],[a',c']] + y$ where $y$ has a zero component 
in $[[a,c],[a',c']]$. We set
$\ttx =y +[f,[a',c']]+[[a,c],f']$. 
The following computation gives the claim.
\begin{eqnarray*}
\partial \ttx&=&
\partial x - \partial( [[a,c],[a',c']]) +\partial([f,[a',c']])+
\partial([[a,c],f'])+\beta\\
&=&
[\alpha,[a',c']] - [\alpha,[a',c']] - [\partial f, [a',c']]
-[[a,c],\alpha']-[[a,c],\partial f']\\
&&
+[\partial f,[a',c']]+[f,\alpha']+[f,\partial f']
+[\alpha,f']+[\partial f,f']+[[a,c],\partial f']
+\partial \beta\\
&=&
-[[a,c],\alpha']+\beta',
\end{eqnarray*}
with $\beta,\,\beta'\in \L(V_{\leq (k-1)})$ and $\tx=-\ttx$.
  \end{proof}
  
  %%%%%%%%%%%%%%%%%%%
  \section{Determination of the cone-length of $\crL_{k}$}\label{sec:proof}
  \begin{quote}
  In this section, we prove $\cl \crL_{k}= k+1$ and the Main Theorem is  a consequence of this determination.
  As $k\geq 3$ is a fixed integer, we denote $\crL_{k}$ by $\crL$ in the following.
  \end{quote}
  
  Let $\crL=(\L(V\oplus \Q\,v),\partial)$ be the minimal dgl built in \secref{sec:examplescl}.
  We already know that $\cat\crL=k$, so its cone-length is equal to $k$ or $k+1$. 
  The proof that $\cl \crL=k+1$ is made by contradiction assuming $\cl\crL=k$.
  So, suppose that there is a quasi-isomorphism,
  \begin{equation}\label{equa:mininonmini}
  \varphi\colon \crL=(\L(V\oplus \Q\,v),\partial)\xrightarrow{\simeq} (\L(W),\delta),
  \end{equation}
  where $(\L(W),\delta)\in \dgl$ admits a decomposition of length $k$, $W=\oplus_{i=1}^k W_{(i)}$.
  The differential $\delta$  is not assumed to be decomposable and we denote by 
  $\delta_{1}\colon W_{\ast}\to W_{\ast-1}$ its linear part.
  
  \medskip
  The use of  \propref{prop:paskminimal} requires  a minimal model. 
  A first approach is the elimination of the linear part $\delta_{1}$ of the differential, 
  replacing $(\L(W),\delta)$ by its minimal model, but we
  must also take into account the decomposition in length $k$ of $(\L(W),\delta)$. 
  It is quite easy to show that we can replace $(\L(W),\delta)$ by a dgl having such a decomposition 
  and whose linear part of the differential satisfies 
  $ \delta_{1}W_{(i)}\subset W_{<(i-1)}$. 
  (We do not detail this point which we do not use subsequently.)
 But the elimination of the other components of the linear part of the differential 
   while keeping a decomposition of length $k$  is generally impossible. 
   Indeed, let us consider for example
    $W=W_{(1)}\oplus W_{(2)}\oplus W_{(3)}$. As announced above,  we can assume 
   $\delta_{1} W_{(2)}=0$ and $\delta_{1}W_{(3)}\subset W_{(1)}$.
  To eliminate the entire linear part $\delta_{1}$, 
   we are now led to identify elements, $w=\delta_{1}x\in W_{(1)}$ 
   with $x\in W_{(3)}$, to $\delta x\in \L(W_{(1)}\oplus W_{(2)})$. 
   The filtration is not respected and the quotient is no longer provided with a decomposition. % of length 3.

  \medskip
  
Although the differential of the minimal model does not respect filtration, 
there are still interactions between them.
First, the image of the differential remains included in the subspace of filtration elements strictly less than $k$. 
Secondly, following the way of \cite{Dupont} for $k=3$, we show  that $(\L(W),\delta)$ 
gives rise to a minimal dgl with quadratic differential, equipped with a decomposition of length $k$, for any $k$. 
(Naturally this dgl is not quasi isomorphic to $(\L(W),\delta)$.)
All that  put us in a situation where  \propref{prop:paskminimal} can give information
and this will be enough to reveal a contradiction.
The following statement specifies the two previous properties.

\begin{proposition}\label{prop:quadraticdiff}
Let $(\L(\ov K),\delta)$ be the minimal  dgl associated to $(\L(W),\delta)$,
seen as a differential Lie subalgebra of $(\L(W),\delta)$. 
The  following properties are satisfied.
\begin{enumerate}[1)]
\item Let $\delta_{2}$ be the quadratic part of $\delta$ on $\L(\ov K)$. 
Then, the minimal  dgl,  $(\L(\ov K),\delta_{2})$, has a decomposition of length $k$.
\item For this decomposition, the differential $\delta$ verifies $\delta \ov K\subset \L(\ov K_{<(k)})$.
\end{enumerate}
\end{proposition}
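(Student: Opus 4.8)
The plan is to construct $\ov K$ together with its length‑$k$ filtration \emph{inside} $(\L(W),\delta)$, and to control its interaction with the decomposition of $W$ by one elementary fact about a free graded Lie algebra: for all graded subspaces $U,U'\subseteq W$,
\begin{equation*}
\L(U)\cap\L(U')=\L(U\cap U'). \tag{$\star$}
\end{equation*}
(Choose a homogeneous basis of $W$ adapted simultaneously to $U$, $U'$ and $U\cap U'$; a Hall‑type basis of $\L(W)$ built on it restricts to such bases of $\L(U)$, $\L(U')$, $\L(U\cap U')$, and $(\star)$ follows by comparing supports. The same argument respects the bracket‑length grading, so also $\L^{[2]}(U)\cap\L^{[2]}(U')=\L^{[2]}(U\cap U')$.) Granting $(\star)$, the two assertions reduce to bookkeeping on bracket length and filtered degree.

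\emph{Set‑up.} First I would fix the minimal model as a sub‑dgl: a graded subspace $\ov K\subseteq W$ with $(\L(\ov K),\delta)$ a sub‑dgl of $(\L(W),\delta)$ and the inclusion a quasi‑isomorphism. Minimality gives $\delta\ov K\subseteq\L^{\ge2}(\ov K)$; and since $\ov K\subseteq W$, the bracket‑length grading of $\L(\ov K)$ is the one induced from $\L(W)$, so the quadratic part $\delta_2$ of $\delta$ on $\L(\ov K)$ is unambiguous and coincides with the restriction of the quadratic part of $\delta$ on $\L(W)$. I filter $\ov K$ by restriction: set $\ov K_{\le(i)}:=\ov K\cap W_{\le(i)}$ and pick complements $\ov K=\bigoplus_{i=1}^{k}\ov K_{(i)}$ with $\ov K_{(i)}\subseteq W_{\le(i)}$ and $\ov K_{(1)}\oplus\cdots\oplus\ov K_{(i)}=\ov K_{\le(i)}$; thus $\ov K_{(1)}\subseteq W_{(1)}$ and $\ov K_{<(i)}=\ov K\cap W_{<(i)}$. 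The only input from the decomposition of $W$ is the following consequence of \defref{def:decomposition}: since $\delta W_{(j)}\subseteq\L(W_{<(j)})\subseteq\L(W_{<(i)})$ for every $j\le i$ (with $W_{<(1)}=0$), one has
\begin{equation*}
\delta\bigl(W_{\le(i)}\bigr)\subseteq\L(W_{<(i)})\quad(1\le i\le k),\qquad\text{in particular}\qquad \delta W\subseteq\L(W_{<(k)}). \tag{$\dagger$}
\end{equation*}

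\emph{The two assertions.} For 2), combine $\delta\ov K\subseteq\L(\ov K)$ (because $\L(\ov K)$ is a sub‑dgl) with $\delta\ov K\subseteq\L(W_{<(k)})$ (from $(\dagger)$, valid since $\ov K\subseteq W$); then $(\star)$ gives $\delta\ov K\subseteq\L(\ov K)\cap\L(W_{<(k)})=\L(\ov K\cap W_{<(k)})=\L(\ov K_{<(k)})$, which is 2). For 1), note first that $(\L(\ov K),\delta_2)$ is a minimal dgl: writing $\delta|_{\L(\ov K)}=\delta_2+\delta_3+\cdots$ by bracket length, the bracket‑length‑$3$ component of $0=\delta^{2}$ is $\delta_2^{2}$, so $\delta_2^{2}=0$, and $\delta_2(\ov K)\subseteq\L^{[2]}(\ov K)$. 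Now take $w\in\ov K_{(i)}$. If $i=1$ then $w\in W_{(1)}$, so $\delta w=0$ and a fortiori $\delta_2 w=0$. If $i\ge2$ then $w\in W_{\le(i)}$, so $\delta w\in\L(W_{<(i)})$ by $(\dagger)$, whence its quadratic part satisfies $\delta_2 w\in\L^{[2]}(W_{<(i)})$; on the other hand $\delta w\in\L(\ov K)$ has no linear part, so $\delta_2 w\in\L^{[2]}(\ov K)$. Using the bracket‑length‑$2$ form of $(\star)$,
\begin{equation*}
\delta_2 w\in\L^{[2]}(\ov K)\cap\L^{[2]}(W_{<(i)})=\L^{[2]}\bigl(\ov K\cap W_{<(i)}\bigr)=\L^{[2]}(\ov K_{<(i)})\subseteq\L(\ov K_{<(i)}).
\end{equation*}
Hence $\delta_2\ov K_{(1)}=0$ and $\delta_2\ov K_{(i)}\subseteq\L(\ov K_{(1)}\oplus\cdots\oplus\ov K_{(i-1)})$ for $i\ge2$; that is, $\ov K=\bigoplus_{i=1}^{k}\ov K_{(i)}$ is a decomposition of length $k$ for $(\L(\ov K),\delta_2)$, proving 1).

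\emph{Main obstacle.} The genuinely delicate point is the first sentence of the set‑up: realizing the minimal model as a sub‑dgl $(\L(\ov K),\delta)\hookrightarrow(\L(W),\delta)$ with $\ov K$ a \emph{subspace of $W$}, while retaining enough of the length‑$k$ decomposition of $W$ to run the argument (in general a change of variables is needed, and it need not respect the filtration). This is exactly the construction carried out by Dupont for $k=3$ in \cite{Dupont}; I would follow it, after which assertions 1) and 2) go through formally and uniformly in $k$ as above.
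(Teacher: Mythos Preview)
Your argument rests on a hypothesis that is false in general: you assume the minimal model can be realized with $\ov K\subseteq W$ as a graded subspace of the generators. It cannot. In the standard construction (the one the paper uses, following \cite[Proposition~3.18]{BFMT}), each generator $z_\ell\in K$ is replaced by $\ov z_\ell=z_\ell-\mu(z_\ell)+\cdots$, where $\mu(z_\ell)$ is a genuinely decomposable element of $\L(W)$ chosen so that the quadratic part of $\delta\ov z_\ell$ lands in $\L^{[2]}(K)$. Thus $\ov K\subset\L(W)$ but $\ov K\not\subset W$. A small example makes this concrete: take $W=\Q\{a,b,e,c\}$ with $\delta a=\delta b=0$, $\delta e=b$, $\delta c=[a,b]$; then $K=\Q\{a,c\}$, but the minimal generator above $c$ must be $\ov c=c-[a,e]$ (so that $\delta\ov c=0$), and no choice of $\ov c$ inside $W$ works.

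Once $\ov K\not\subset W$, your two tools collapse simultaneously: the filtration $\ov K_{\le(i)}:=\ov K\cap W_{\le(i)}$ is undefined, and the identity $(\star)$ $\L(U)\cap\L(U')=\L(U\cap U')$ cannot be applied with $U=\ov K$ and $U'=W_{<(i)}$. You flag this yourself as the ``main obstacle'' and propose to resolve it by Dupont's construction, but that construction does not produce $\ov K\subset W$; it produces exactly the modified generators $\ov z_\ell$ above. After that construction your formal argument does \emph{not} go through: the whole content of the proposition is to show that the filtration survives the passage $z_\ell\mapsto\ov z_\ell$. The paper does this by tracking the filtration through the inductive step: for $z\in K_{(i)}$ one writes $\delta_2 z=\alpha(z)+\beta(z)$ with $\alpha(z)\in\L^{[2]}(K_{<(i)})$ (using the decomposition of $W$) and $\beta(z)$ in the acyclic ideal, so that after the correction $\delta_2\ov z_\ell=\alpha(\ov z_\ell)$ lies in $\L^{[2]}(\ov K_{<(i)})$; this is what gives~1). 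Part~2) requires a further idea: one first passes to the sub-dgl on $Z=W\setminus R_{(k)}$, builds the minimal model there, and then observes that quotienting by $\delta R_{(k)}$ recovers $(\L(\ov K),\delta)$ while the construction of each $\ov z$ stays inside $\L(Z_{<(k)})$. None of this is recovered by the intersection lemma~$(\star)$.
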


\begin{proof}
To accomplish this, let's  recall (\cite[Proposition 3.18]{BFMT}) 
the construction of the minimal dgl model of $(\L(W),\delta)$,
detailing more particularly the quadratic part of $\delta$.
We write $W=K\oplus (R\oplus \delta_{1}R)$ with $K\subset \ker\delta_{1}$.
We set  filtrations on $K$ and $R$ by $K_{(i)}=K\cap W_{(i)}$ and $R_{(i)}=R\cap W_{(i)}$. 

\medskip
In the following short exact sequence, the map $\rho$ is induced
by the projection $W\to K$ and the kernel $\crI$ of $\rho$ is acyclic 
for the differential $\delta_{1}$,
induced by the linear part  of  
$\delta$ on $\L(W)$:
$$\xymatrix@1{
0\ar[r]&
(\crI,\delta_{1})\ar[r]&
(\L(W),\delta_{1})\ar[r]^-{\rho}&
(\L(K),0)\ar[r]&
0.}$$
We filter the ideal $\crI$ by $\crI^{[q]}=\crI\cap\L^{[q]}(W)$. Let $z\in K_{(i)}$. We decompose the quadratic part of $\delta z$ as
\begin{equation}\label{equa:quad}
\delta_{2}z=\alpha(z)+\beta(z)\in \L^{[2]}(K)\oplus \crI^{[2]}.
\end{equation}
As $(\L(W),\delta)$ has a decomposition of length $k$,  we have 
\begin{equation}\label{equa:filtrationalpha}
\alpha(z)\in \L(K_{<(i)}).
\end{equation}
From $\delta^2=0$ and \eqref{equa:quad}, we deduce
$\delta_{1}\delta_{2}z
=
-\delta_{2}\delta_{1}z=0$ and
$\delta_{1}\delta_{2}z
=
\delta_{1}\beta(z)$.
The kernel $\crI$ being acyclic, there exists $\mu(z)\in\crI^{[2]}$ such that
$\beta(z)=\delta_{1}\mu(z)$. We set $z'=z-\mu(z)$. The linear and quadratic parts of $\delta z'$ in $\L(W)$ are
\begin{eqnarray}\label{equa:quad2}
\delta_{1}z'=0
\quad\text{and}\quad
\delta_{2}z'=\delta_{2}z-\delta_{1}\mu(z)=\alpha(z) \in \L^{[2]}(K).
\end{eqnarray} 
We carry out this transformation for each element $z_{\ell}$ of a basis $(z_{\ell})_{\ell\in I}$ of $K$ and  denote by $K'$ 
the vector space of basis   $(z'_{\ell})_{\ell\in I}$.
As $\crI$ is a Lie ideal, the replacement of $z_{\ell}$ by its value gives
$\alpha(z_{\ell})=\alpha(z'_{\ell})+\crI^{>[2]}$. We thus have
\begin{equation}\label{equa:prime}
\delta_{2}z'_{\ell}\in\alpha(z'_{\ell})+\crI^{>[2]}.
\end{equation}

\medskip
By induction on the size of brackets in $\L(W)$, a vector space $\ov K$, of basis
$(\ov z_{\ell})_{\ell \in I}$, is constructed in \cite[Proposition 3.18]{BFMT} such that
\begin{enumerate}[(i)]
\item $\ov z_{\ell}-z'_{\ell}\in \crI^{[>2]}$,
%
%\item 
$\delta (\ov z_{\ell})\in \L(\ov{K})$, and
\item  the canonical injection
$(\L(\ov K),\delta)\sqcup \L(R\oplus \delta R)
\xrightarrow{\cong}
 (\L(W),\delta)$
is an isomorphism.
\end{enumerate}

\medskip
1) 
So $(\L(\ov K),\delta)$ is the minimal model of $(\L(W),\delta)$ and,
with  (i) and \eqref{equa:prime},  the quadratic part of its differential is obtained  by:
\begin{equation}\label{equa:filtrationdeux}
\delta_{2}(\ov z_{\ell})=\alpha(\ov z_{\ell}).
\end{equation}
We define a filtration on $\L(\ov K)$ by assigning to $\ov z_{\ell}$ the filtration degree of $z_{\ell}$.
From \eqref{equa:filtrationalpha} and \eqref{equa:filtrationdeux}, we deduce 
a decomposition of length $k$ on $(\L(\ov K),\delta_{2})$.

\medskip
2) We decompose $R=R_{(k)}\oplus U$ and set $Z=K\oplus U\oplus \delta_{1}R$.
We  observe that $(\L(Z),\delta)$ is a sub dgl of $(\L(W),\delta)$. 
We apply the previous construction of minimal model  to $(\L(Z),\delta)$.
The initial decomposition becomes
$$Z= (K\oplus \delta_{1}R_{(k)})\oplus (U\oplus \delta_{1}U),$$
since the elements of $\delta_{1}R_{(k)}$ are no longer in the image of $\delta_{1}$ in $Z$.
In this process, property (ii) above  
gives an isomorphism of dgl's,
\begin{equation}\label{equa:sanslek}
(\L(\ov{K}\oplus \ov{\delta_{1}R_{(k)}}),\delta)\sqcup \L(U\oplus \delta U)
\xrightarrow{\cong} (\L(Z),\delta).
\end{equation}
Following the construction  by induction of the minimal model, we note 
(see \remref{rem:minimal} for more details) that, if $z\in R_{(k)}$
then in \eqref{equa:sanslek}, we have 
\begin{equation}\label{equa:bardelta1}
\ov{\delta_{1}z}=\delta z.
\end{equation}
In the next diagram, 
the columns are cofibrations and the lines are canonical injections,
$$\xymatrix{
(\L(\delta R_{(k)}),0)\ar@{=}[r]\ar[d]&
(\L(\delta R_{(k)}),0)\ar[d]\\
(\L(\ov K\oplus \delta R_{(k)}),\delta)\ar[r]^-{\simeq}_-{\iota_{1}}\ar[d]&
(\L(Z),\delta)\ar[d]\\
(\L(\ov K\oplus \delta R_{(k)}\oplus R_{k}),\delta)\ar[r]^-{\simeq}_-{\iota_{2}}
&
(\L(Z\oplus R_{(k)}),\delta)\cong \L(W),\delta).
}$$
The map $\iota_{1}$ is a quasi-isomorphism by construction, so is the induced map $\iota_{2}$ between the cofibres.
Therefore the minimal model of $(\L(W),\delta)$ can be obtained from 
$(\L(\ov K\oplus \delta R_{(k)}),\delta)$
by quotienting the ideal generated by  the subspace   $\delta R_{(k)}$, formed by $\delta$-cycles.

\medskip
The dgl $(\L(\ov K\oplus \delta R_{(k)}),\delta)$ is constructed through a sequence of isomorphisms beginning with
$z\mapsto z'=z-\mu(z)$, as described above.
We now use the existence of a decomposition of length $k$ on $(\L(W),\delta)$
and set
$$Z_{<(k)}:=
(K_{<(k)}\oplus \delta_{1}R_{(k)})\oplus (U\oplus \delta_{1}U)\subset W_{(k)}.$$
The Lie subalgebra $\L(Z_{<(k)})$ is stable to the differential $\delta$. 
Let $z\in K_{(k)}$. As in \eqref{equa:quad}, we decompose
$\delta z=\alpha(z)+\beta(z)$,
with $\alpha(z)\in \L(\ov K_{<(k)}\oplus \delta_{1} R_{(k)})$ and $\beta(z)$
 in the kernel $\crK$ of the canonical projection:
$$\xymatrix@1{
0\ar[r]&
(\crK,\delta_{1})\ar[r]&
(\L(Z_{<(k)}),\delta_{1})\ar[r]&
(\L(K_{<(k)}\oplus \delta_{1}R_{(k}),0)\ar[r]&
0.}$$
As Lie algebra (see \cite[Proposition VI.2.(7)]{Daniel} for instance), this kernel $\crK$ is isomorphic to
$$\crK\cong \L((T(K_{<(k)}\oplus \delta_{1}R_{(k)}))\otimes (U\oplus \delta_{1}U)).$$
The dgl $(\crK,\delta_{1})$ is acyclic and the element $\mu(z)$
such that $\delta_{1}\mu(z=\beta(z)$ can be chosen in $\crK$.
There is no element of filtration degree $k$ in $\crK$, so we have
$\delta z'\in \L(Z'_{<(k)})$.
Repetitions of this argument for the determination of a basis $(\ov z)_{\ell\in I}$ prove the result.
\end{proof}
   
 \begin{remark}\label{rem:minimal}
 In the previous proof, the fact that $\ov{\delta_{1}z}=\delta z$ in \eqref{equa:bardelta1} is not entirely obvious. 
 There is a hidden point in the proof of \cite[Proposition 3.18]{BFMT}: 
 the short exact sequences allowing the induction step change at each index $n$.
 Moreover the modifications of generators are not linear, so the notion of bracket length has to be taken with care.
 Let us illustrate this for $n=2$ and  $n=3$  with an element $\delta_{1 }z\in \delta_{1}R_{k}\subset Z$, 
 with the notations of the proof.
 We start with $\delta_{2}\delta_{1}z=-\delta_{1}\delta_{2}z$, so 
 $\alpha(\delta_{1}z)=0$ and $\beta(\delta_{1}z)=-\delta_{2}z$, with gives
 $(\delta_{1}z)'=(\delta_{1}+\delta_{2})z$. 
 For the next step, we work with $\L((K\oplus \delta_{1}R_{k})')$, where $\delta_{1}z+\delta_{2}z$ is an indecomposable. 
 For instance, the quadratic part $\delta'_{2}$ of $(\delta_{1}z)'$ in $\L((K\oplus \delta_{1}R_{k})')$ 
 can be computed with the bracket length in $\L(K\oplus \delta_{1}R_{k})$ as
 $\delta'_{2}(\delta_{1}z)'=\delta_{2}\delta_{1}z+ \delta_{1}\delta_{2}z=0$.
 The next inductive step begins with the writing of the part of bracket length 3, $\delta'_{3}$, of the differential in  
 $\L((K\oplus \delta_{1}R_{k})')$. We determine it from the bracket length in $\L(K\oplus \delta_{1}R_{k})$ as
 $$\delta'_{3}(\delta_{1}z)'=\delta_{3}\delta_{1}z\oplus \delta_{2}\delta_{2}z=-\delta_{1}\delta_{3}(z).$$
 So, the modification of generators is $(\delta_{1}+\delta_{2})z\mapsto (\delta_{1}+\delta_{2}+\delta_{3})z$.
 The rest is straightforward.
 (Let us also mention that the  summation limits in the formula of $d_{1}d_{n+1}z$ located in the middle of the proof of
 \cite[Proposition 3.18]{BFMT} have to be modified in $\sum_{i=2}^{n+1}$.)
 \end{remark}
   
  \begin{proof}[Proof of \thmref{thm:superDupont}]
Let $\crL=(\L(V\oplus \Q\,v),\partial)$ be the minimal dgl built in \secref{sec:examplescl}.
Suppose  there is a quasi-isomorphism,
$
  \varphi\colon \crL \xrightarrow{\simeq} (\L(W),\delta),
$
  where $(\L(W),\delta)\in \dgl$ admits a decomposition of length $k$, $W=\oplus_{i=1}^k W_{(i)}$.
  We are looking for a contradiction.
  
 \medskip
 Recall the existence of quasi-isomorphisms,
 \begin{equation}
 \xymatrix{
 \crL\ar[d]^-{\varphi}
 \ar[rd]^-{\Phi}
 &\\
 (\L(W),\delta)
 \ar@<1ex>[r]^-{\rho}
 & (\L(\ov K),\delta)\ar@<1ex>[l]^-{\iota}
 }
 \end{equation}
 with $\Phi=\rho\circ\varphi$. 
 Denote by $\Phi^{[1]}\colon (\L(V),\partial_{2})\sqcup (\L(\Q v),0)\to (\L(\ov K),\delta_{2})$ the dgl map 
 defined from the linear part $\Phi_{l}\colon V\oplus \Q v\to \ov K$ of $\Phi$. (Recall that the restriction of $\partial$
 to $V$ is quadratic.)
 The map $\Phi$ is a quasi-isomorphism between minimal dgl's, therefore $\Phi$ and $\Phi_{l}$ are isomorphisms.
 Thus there exists $\omega\in\crL$ such that
 \begin{equation}
 \Phi(\omega)=\Phi(v)-\Phi_{l}(v)
 \end{equation}
 and, with \propref{prop:quadraticdiff}, we have
 \begin{equation}\label{equa:diffomega}
 \Phi\partial(v-\omega)=\delta\Phi(v-\omega)=\delta \Phi_{l}(v)\in \L(\ov K_{<(k)}).
 \end{equation}
 By construction, the element $\omega$ is  of degree $8k+4$. 
 The elements of a homogeneous basis of $V\oplus \Q v$ are in degrees
 $4i-1$, with $i=1,\dots,k$, and $2k+1$. 
 These numbers being odd, the element $\omega$ is formed by  brackets of even length. With the respective
 degrees, the element $\omega$ cannot  contained  brackets of length 2, thus $\omega\in\crL^{[\geq 4]}$.
 Denote by $\Omega$ the part of $\omega$ of length 4 
 From \eqref{equa:diffomega}, we deduce
$\Phi_{l}\delta (v-\Omega)\in \L(\ov K_{<(k)})$, since it is the term in the lower bracket length.
\propref{prop:paskminimal} implies that in the linear decomposition of $\delta (v-\Omega)$, 
there must be a bracket of length 5 in which one of the 5 vectors belongs to
the vector space $\Q\{c,c'\}$ generated by $c$ and $c'$. Denote this element by $\varepsilon$.
 The quadratic part of the differential corresponds, by duality (see \cite[Proposition III.3.(9)]{Daniel} for instance) 
 to a cup product.
 In this duality, the elements  $c$ and $c'$ give non zero cup products of length $k$. 
 The image of $\varepsilon$ by a linear isomorphism
 cannot be in a dgl with a decomposition of length $k-1$. 
 This algebraic argument is the traduction of 
 ``the cup length is less than, or equal to, the cone-length.''
 We have got the contradiction.
    \end{proof} 
    
    \begin{remark}\label{rem:recap}
   Below we summarize the characteristics of the main elements introduced in the text.
 \begin{equation*}
 %\boxed{
 \left\{
 \begin{split}
 &|a_{i}|=4i-1,\quad
 |b|=2k-1,\quad 
 |\alpha|=4k+1,\quad
 |f|=|\hat{\alpha}]=4k+2,\\
 &\alpha=
 [a,[b,b]],\quad 
  \hat{\alpha}=[a,c]-f,\quad
  \partial \hat{\alpha}=\alpha,\quad
  \partial f=[a,da_{k}],\quad \partial \gamma=[\alpha,\alpha'],
 \\
 & |c|=4k-1,\quad
 \partial c=-[b,b]-da_{k},\quad \partial [a,c]=\alpha+\partial f,
 \\
 &
|v|=8k+4, \quad \partial v = [\alpha,\hat{\alpha}']+\gamma =[[a,[b,b]],[a',c']-f']+\gamma.
 \end{split}
 \right.
% }
 \end{equation*}
    \end{remark}

\end{document}